\newtheorem{theorem}{Theorem}[section]
\newtheorem{corollary}[theorem]{Corollary}
\newtheorem{remark}[theorem]{Remark}
\theoremstyle{definition} \theoremstyle{remark}
\numberwithin{equation}{section}
\date{}
\begin{document}

\markboth{\\ G. He et al.}{On the quasi-ergodic distribution of absorbing Markov processes}


\title{{\bf On the quasi-ergodic distribution of absorbing Markov processes}}
\setcounter{footnote}{-1}
\author{Guoman He$^{1, 2}$\thanks{\small\footnotesize{$^{1}$School of Mathematics and Statistics, Hunan University of Commerce, Changsha, Hunan 410205, PR China.
~~Email address: hgm0164@163.com
\newline
\indent~~$^{2}$Key Laboratory of Hunan Province for New Retail Virtual Reality Technology, Hunan University of Commerce, Changsha, Hunan 410205, PR China}},~~~~~~
\setcounter{footnote}{-1}
Hanjun Zhang$^{3}$\thanks{\footnotesize {$^{3}$School of Mathematics and Computational Science, Xiangtan University, Xiangtan, Hunan 411105, PR China.
~Email address: hjz001@xtu.edu.cn}},~~~~~~
\setcounter{footnote}{-1}
Yixia Zhu$^{4}$\thanks{\footnotesize {$^{4}$School of Mathematics and Statistics, Hunan University of Finance and Economics, Changsha, Hunan 410205, PR China.
~Email address: zhuyixia62@163.com}}
\\[1.8mm]}

\baselineskip 0.23in

\maketitle

\begin{abstract}
In this paper, we give a sufficient condition for the existence of a quasi-ergodic distribution for absorbing Markov processes. Using an orthogonal-polynomial approach, we prove that the previous main result is valid for the birth--death process on the nonnegative integers with 0 an absorbing boundary and $\infty$ an entrance boundary. We also show that the quasi-ergodic distribution is stochastically larger than the unique quasi-stationary distribution in the sense of monotone likelihood-ratio ordering for the birth--death process.
\\[2mm]
{\bf Keywords:} Process with absorption; quasi-ergodicity; quasi-stationary distribution; birth--death process
\\[1.2mm]
{\bf 2010 MSC:} Primary 60J25; Secondary 37A30, 60J80
\end{abstract}

\baselineskip 0.234in
\section{Introduction}
\label{sect1}
Let $(\Omega, (\mathcal{F}_{t})_{t\geq0}, (X_{t})_{t\geq0}, (P_{t})_{t\geq0}, (\mathbb{P}_{x})_{x\in E\cup\{\partial\}})$ be a time-homogeneous Markov process with state space $E\cup\{\partial\}$, where $(E, \mathcal {E})$ is a measurable space and $\partial\not\in E$ is a cemetery state. Let $\mathbb{P}_{x}$ and $\mathbb{E}_{x}$ stand for the probability and the expectation, respectively, associated with the process $X$ when initiated from $x$. We assume that the process $X$ has a finite lifetime $T$, i.e.,
for all $x\in E$,
$$\mathbb{P}_{x}(T<\infty)=1, $$
where $T=\inf\{t\geq0: X_t=\partial\}$. We also assume that for all $t\geq0$ and $x\in E$,
$$\mathbb{P}_{x}(t<T)>0.$$
The main purpose of this work is to study the existence of a quasi-ergodic distribution for an absorbing Markov process, and give a comparison between the quasi-ergodic distribution and the quasi-stationary distribution for a class of birth--death process.
\par
A probability measure $\nu$ on $E$ is called a {\em quasi-stationary distribution} if, for all $t\geq0$ and any $A\in\mathcal {E}$,
\begin{equation*}
\mathbb{P}_{\nu}(X_t\in A|T>t)=\nu(A).
\end{equation*}
Quasi-stationary distribution for a killed Markov process has been studied by various authors since 1940s. On this topic, we refer the reader to survey papers
\cite{MV12, VDP13} and the book \cite{CMS13} for the background and more informations.
\par
A probability measure $m$ on $E$ is called a {\em quasi-ergodic distribution} if, for any $x\in E$ and any bounded measurable function $f$ on $E$, the following limit exists:
\begin{equation*}
\lim_{t\rightarrow\infty}\mathbb{E}_{x}\left(\frac{1}{t}\int_{0}^{t}f(X_s)ds|T>t\right)=\int_{E}f(x)m(dx).
\end{equation*}
\par
We remark that the above limiting law of the time-average, which we called quasi-ergodic distribution, comes from the paper \cite{BR99}, where the authors proved initially a conditioned version of the ergodic theorem for Markov processes. Under some mild conditions, Chen and Deng \cite{CD13} showed that a quasi-ergodic distribution can be characterized by the Donsker--Varadhan rate functional which is typically used as the large deviation rate function for Markov processes. Recently, many authors have extensively studied the quasi-ergodic distribution; see \cite{CLJ12,HZ16,ZLS14} for example. In existing research works, it often needs to assume the process is $\lambda$-positive (see, e.g., \cite{BR99, CLJ12, F74}), except some specific cases. However, for general, almost surely absorbed Markov processes, checking whether it is $\lambda$-positive is not an easy thing to do. This leads us to look for some alternative conditions ensuring the existence of quasi-ergodic distributions of absorbing Markov processes.
\par
Quasi-ergodic distribution, sometimes referred to as the limiting conditional mean ratio quasi-stationary distribution \cite{F74}, is quite different from quasi-stationary distribution (see, e.g., \cite{CLJ12,HZ16,ZLS14}). A natural question is whether there is a relationship between them$?$ In this paper, we plan to give a comparison between them for a class of birth--death process, who admits a quasi-ergodic distribution and a unique quasi-stationary distribution. We show that the quasi-ergodic distribution is stochastically larger than the unique quasi-stationary distribution in the sense of monotone likelihood-ratio ordering (see, e.g., \cite{W85}) for the process. In other words, the quasi-ergodic distribution provides an upper bound for the quasi-stationary distribution in the sense of monotone likelihood-ratio ordering. Although the quasi-stationary distributions of absorbing Markov processes are known to have considerable practical importance in, e.g., ecology, biological, and physical chemistry, computation of the quasi-stationary distributions is often nontrivial. Thus, the comparison result seems to be meaningful.
\par
In this work, we first prove that, under suitable assumptions, there exists a quasi-ergodic distribution for the absorbing Markov process. In order to illustrate the previous main result, we then consider a birth--death process on the nonnegative integers with $0$ as an absorbing state and $\infty$ as an entrance boundary. Based on orthogonal polynomial techniques of \cite{KM57b}, we show that the previous main result is valid for the birth--death process.
\par
The remainder of this paper is organized as follows. The main result and its proof are presented in Section \ref{sect2}. In Section \ref{sect3} we study the case of birth--death processes. We conclude in Section \ref{sect5} with an example.
\par

\section{Main result}
\label{sect2}
In this paper, our main goal is to prove that Assumption (A) below is a sufficient criterion for the existence of a quasi-ergodic distribution for an absorbing Markov process.
We point out that, for an absorbing Markov process, Assumption (A) is a necessary and sufficient condition for exponential convergence to a unique quasi-stationary distribution in the total variation norm (see \cite[Theorem 2.1]{CV16}).
\vskip0.2cm
\noindent{\bf Assumption (A)} There exists a probability measure $\nu_{1}$ on $E$ such that
\vskip0.1cm
\noindent(A1) there exist $t_{0}, c_{1}>0$ such that, for all $x\in E$,
\begin{equation*}
\mathbb{P}_{x}(X_{t_{0}}\in\cdot|T>t_{0})\geq c_{1}\nu_{1}(\cdot);
\end{equation*}
\noindent(A2) there exists $c_{2}>0$ such that, for all $x\in E$ and $t\geq0$,
\begin{equation*}
\mathbb{P}_{\nu_{1}}(T>t)\geq c_{2}\mathbb{P}_{x}(T>t).
\end{equation*}
\par
In order to let the reader have a better understanding of Assumption (A), its specific meaning (see \cite{CV16}) is restated here: If $E$ is a Polish space, then Assumption (A1) implies that the process $X$ comes back fast in compact sets from any initial conditions. If $E=\mathbb{N}$ or $\mathbb{R}_{+}:=(0, +\infty)$ and $\partial=0$, then Assumption (A1) implies that the process $X$ {\em comes down from infinity} (see \cite{CMS13}); Assumption (A2) means that the highest non-absorption probability among all initial points in $E$ has the same order of magnitude as the non-absorption probability starting from the probability distribution $\nu_{1}$.
\par
According to \cite[Proposition 2.3]{CV16}, we know that Assumption (A) implies that there exists a non-negative function $\eta$ on $E\cup\{\partial\}$, which is positive on $E$ and vanishes on $\partial$, such that
\begin{equation}
\label{2.1}
 \eta(x)=\lim_{t\to\infty}e^{\lambda t}\mathbb{P}_{x}(T>t),
\end{equation}
where the convergence holds for the uniform norm on $E\cup\{\partial\}$ and $\lambda>0$. Moreover, $\eta$ belongs to the domain of the infinitesimal generator $L$ of
the semigroup $(P_{t})_{t\geq0}$ on the set of bounded Borel functions on $E\cup\{\partial\}$ equipped with uniform norm and
\begin{equation*}
L\eta=-\lambda\eta.
\end{equation*}
\par
According to \cite[Theorem 3.1]{CV16}, we know that Assumption (A) implies that the $Q$-process $($the process conditioned to never be absorbed$)$ exists. More precisely, if Assumption (A) holds, then for all $A\in\mathcal{F}_{s}$ and all $s\geq0$, the family $(\mathbb{Q}_{x})_{x\in E}$ of probability measures on $\Omega$ defined by
\begin{equation*}
\mathbb{Q}_{x}(A)=\lim_{t\to\infty}\mathbb{P}_{x}(A|T>t)
\end{equation*}
is well-defined, and the process $(\Omega, (\mathcal{F}_{t})_{t\geq0}, (X_{t})_{t\geq0}, (\mathbb{Q}_{x})_{x\in E})$ is an $E$-valued homogeneous Markov process.
\par
The following theorem is our main result.
\begin{theorem}
\label{thm2.1}
Assume that Assumption $(\mathrm{A})$ holds. Then, there exists a quasi-ergodic distribution
\begin{equation*}
m(dx)=\eta(x)\nu(dx)
\end{equation*}
for the process $X$, where $\nu$ is the unique quasi-stationary distribution of the process $X$. In particular, $m$ is the unique stationary distribution of the $Q$-process.
\end{theorem}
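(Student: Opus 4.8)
\noindent\emph{Proof proposal.} Throughout write $P_tf(x)=\mathbb{E}_x\!\left(f(X_t)\mathbf 1_{t<T}\right)$ for the sub-Markovian semigroup and $\phi_t(x):=\mathbb{P}_x(T>t)=P_t\mathbf 1(x)$. My starting point is the Doob ($h$-)transform description of the $h$-process. From $L\eta=-\lambda\eta$ one has $P_t\eta=e^{-\lambda t}\eta$, and combining this with the definition of $\mathbb{Q}_x$ and \eqref{2.1} identifies the semigroup of the $h$-process as $\widetilde P_tf(x)=e^{\lambda t}\eta(x)^{-1}P_t(\eta f)(x)$, which is conservative since $\widetilde P_t\mathbf 1=e^{\lambda t}\eta^{-1}P_t\eta=\mathbf 1$. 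Since $\nu$ is a quasi-stationary distribution, $\nu P_t=e^{-\lambda t}\nu$ and $\mathbb{P}_\nu(T>t)=e^{-\lambda t}$; integrating \eqref{2.1} against $\nu$ (legitimate by the uniform convergence there) gives $\int_E\eta\,d\nu=1$, so $m=\eta\,\nu$ is a probability measure, and
\[
\int_E\widetilde P_tf\,dm=e^{\lambda t}\!\int_E P_t(\eta f)\,d\nu=e^{\lambda t}\,e^{-\lambda t}\!\int_E \eta f\,d\nu=\int_E f\,dm
\]
shows that $m$ is invariant for the $h$-process.

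Next I would establish the pointwise limit $\widetilde P_tf(x)\to\int_E f\,dm$ for every $x\in E$ and every bounded measurable $f$. To see this, write $\widetilde P_tf(x)=\bigl(e^{\lambda t}\mathbb{P}_x(T>t)\bigr)\,\eta(x)^{-1}\,\mathbb{E}_x\!\left(\eta(X_t)f(X_t)\mid T>t\right)$: the prefactor tends to $\eta(x)$ by \eqref{2.1}, while the conditional expectation tends to $\int_E\eta f\,d\nu=\int_E f\,dm$ because $\eta f$ is bounded and, by Assumption~(A) and \cite[Theorem~2.1]{CV16}, $\mathbb{P}_x(X_t\in\cdot\mid T>t)\to\nu$ in total variation. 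Passing to the limit in $\int_E g\,d\mu=\int_E\widetilde P_tg\,d\mu$ (dominated convergence) for any $\widetilde P_t$-invariant probability $\mu$ then forces $\mu=m$, so $m$ is the unique stationary distribution of the $h$-process, which is the ``in particular'' part of the statement.

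The core of the argument is to transfer this pointwise limit to the conditioned time-average. The Markov property at time $s$ gives $\mathbb{E}_x\!\left(f(X_s)\mathbf 1_{T>t}\right)=P_s\!\bigl(f\,\phi_{t-s}\bigr)(x)$ for $s\le t$, whence by Fubini
\[
\mathbb{E}_x\!\left(\frac1t\int_0^t f(X_s)\,ds\,\Big|\,T>t\right)=\frac{1}{t\,\phi_t(x)}\int_0^t P_s\!\bigl(f\,\phi_{t-s}\bigr)(x)\,ds.
\]
Writing $\phi_u=e^{-\lambda u}(\eta+\rho_u)$ with $\varepsilon_u:=\|\rho_u\|_\infty\to0$ by \eqref{2.1} (replace $\varepsilon_u$ by $\sup_{v\ge u}\varepsilon_v$ to make it nonincreasing) and using the uniform bound $\sup_{u\ge0}\|e^{\lambda u}\phi_u\|_\infty<\infty$ (finite since $e^{\lambda u}\phi_u\le e^{\lambda u}$ on bounded sets and $e^{\lambda u}\phi_u\to\eta$ uniformly) together with $P_s\mathbf 1=\phi_s$, a direct estimate shows that $\frac{1}{t\phi_t(x)}\int_0^t P_s(f\phi_{t-s})(x)\,ds$ equals $\frac1t\int_0^t\widetilde P_sf(x)\,ds$ up to an error bounded by $O(\varepsilon_t)+O\!\bigl(\tfrac1t\int_0^t\varepsilon_u\,du\bigr)$, which vanishes as $t\to\infty$ (the second term by Cesàro, since $\varepsilon_u\to0$). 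Combining this with the pointwise limit of the previous paragraph and Cesàro's theorem yields $\mathbb{E}_x(\tfrac1t\int_0^t f(X_s)\,ds\mid T>t)\to\int_E f\,dm$, so $m$ is a quasi-ergodic distribution; uniqueness is then immediate, since any quasi-ergodic distribution $m'$ must satisfy $\int_E f\,dm'=\int_E f\,dm$ for all bounded measurable $f$.

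The step I expect to be the main obstacle is this last reduction: one must control, uniformly in the growing horizon $t$, both the contribution of times $s$ close to $t$ — where $\phi_{t-s}$ is far from its limiting profile $e^{-\lambda(t-s)}\eta$ — and the accumulated effect of the remainder $\rho_{t-s}$ over $s\in[0,t]$. This is precisely where the \emph{uniform} convergence in \eqref{2.1}, rather than mere pointwise convergence of $e^{\lambda t}\mathbb{P}_x(T>t)$ to $\eta(x)$, is indispensable; everything else reduces to the ergodic behaviour of the $h$-process, which \eqref{2.1} and \cite[Theorem~2.1]{CV16} already supply.
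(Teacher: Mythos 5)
Your proof is correct, and it rests on the same two external inputs as the paper's argument --- the uniform convergence in (\ref{2.1}) and the convergence to the quasi-stationary distribution from \cite[Theorem 2.1]{CV16} --- but it organizes the reduction differently. The paper fixes a fraction $q\in(0,1)$, proves $\mathbb{E}_x(f(X_{qt})\mid T>t)\to\int_E f\,dm$ by a one-sided estimate built on the monotone envelope $h_u(x)=\inf_{r\ge u}e^{\lambda r}\mathbb{P}_x(T>r)/\eta(x)$ (obtaining the matching upper bound by replacing $f$ with $\|f\|_\infty-f$), and only then passes to the time average via the substitution $s=qt$ and dominated convergence over $q\in[0,1]$. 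You instead keep the full time integral, identify its leading term as the Ces\`aro average of the $h$-process semigroup $\widetilde P_sf$, prove the pointwise ergodic limit $\widetilde P_sf(x)\to\int_E f\,dm$ directly, and absorb the discrepancy between $\phi_{t-s}$ and $e^{-\lambda(t-s)}\eta$ into a two-sided error of order $\varepsilon_{t-s}$ killed by a Ces\`aro argument. The two routes are of comparable difficulty: yours makes the role of the $h$-process and its ergodicity explicit and avoids the positive-$f$/complement trick (you also re-derive the invariance and uniqueness of $m$ for $\widetilde P_t$ rather than citing \cite[Theorem 3.1]{CV16}), while the paper's yields the marginal convergence $\mathbb{E}_x(f(X_{qt})\mid T>t)\to\int_E f\,dm$ for each fixed $q$, a statement of independent interest. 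One point worth making explicit in your write-up: the boundedness of $\eta$ (from \cite[Proposition 2.3]{CV16}) is what guarantees both that $\eta f$ is bounded, so that \cite[Theorem 2.1]{CV16} applies to it, and that $\sup_{u\ge 0}\|e^{\lambda u}\phi_u\|_\infty<\infty$; your argument uses it in the same places the paper does.
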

\begin{proof}
From \cite[Proposition 2.3]{CV16}, we know that $\int_{E}\eta(x)\nu(dx)=1$. Then, $m$ is a probability distribution on $E$.
Next, we first assume that $f$ is positive and bounded. For fixed $u$, we set
\begin{equation*}
h_{u}(x)=\inf\{e^{\lambda r}\mathbb{P}_{x}(T>r)/\eta(x):r\geq u\}.
\end{equation*}
From (\ref{2.1}), one can easily see that $h_{u}(x)\uparrow1$, as $u\rightarrow\infty$. Let $0<q<1$. When $(1-q)t\geq u$, by the Markov property, we obtain
\begin{eqnarray*}
\mathbb{E}_{x}(f(X_{qt})|T>t)&=&\frac{\mathbb{E}_{x}(f(X_{qt}),T>t)}{\mathbb{P}_{x}(T>t)}\\
                             &=&\frac{\mathbb{E}_{x}[f(X_{qt}){\bf1}_{\{T>qt\}}\mathbb{P}_{X_{qt}}(T>(1-q)t)]}{\mathbb{P}_{x}(T>t)}\\
                             &=&\frac{e^{\lambda qt}\mathbb{E}_{x}[f(X_{qt}){\bf1}_{\{T>qt\}}e^{\lambda(1-q)t}\mathbb{P}_{X_{qt}}(T>(1-q)t)]}{e^{\lambda t}\mathbb{P}_{x}(T>t)}\\
                             &\geq&\frac{e^{\lambda qt}\mathbb{E}_{x}[f(X_{qt})h_{u}(X_{qt})\eta(X_{qt}){\bf1}_{\{T>qt\}}]}{e^{\lambda t}\mathbb{P}_{x}(T>t)},
\end{eqnarray*}
where ${\bf1}_{A}$ denotes the indicator function of $A$.
\par
From \cite[Proposition 2.3]{CV16}, we know that $\eta$ is bounded. Moreover, because the convergence in (\ref{2.1}) is uniform in $x\in E$, there
exists a constant $C>0$ such that, for all $r\geq u$ and $x\in E$,
\begin{equation*}
|f(x)h_{u}(x)\eta(x)|\leq|f(x)e^{\lambda r}\mathbb{P}_{x}(T>r)|\leq C\|f\|_{\infty}\|\eta\|_{\infty}.
\end{equation*}
Therefore, the function $fh_{u}\eta$ is bounded and measurable. According to \cite[Theorem 2.1]{CV16}, we know that for any $x\in E$ and any bounded measurable function
$g$ on $E$,
\begin{equation}
\label{2.33}
\lim_{t\to\infty}\mathbb{E}_{x}(g(X_{t})|T>t)=\int_{E}g(x)\nu(dx).
\end{equation}
So, by (\ref{2.1}) and (\ref{2.33}), we obtain
\begin{eqnarray*}
\liminf_{t\to\infty}\mathbb{E}_{x}(f(X_{qt})|T>t)
&\geq&\lim_{t\to\infty}\frac{e^{\lambda qt}\mathbb{E}_{x}[f(X_{qt})h_{u}(X_{qt})\eta(X_{qt}){\bf1}_{\{T>qt\}}]}{e^{\lambda t}\mathbb{P}_{x}(T>t)}\\
&=&\int_{E}f(x)h_{u}(x)\eta(x)\nu(dx).
\end{eqnarray*}
Based on the monotone convergence theorem, by letting $u\rightarrow\infty$ in the above formula, we have
\begin{equation}
\label{2.2}
\liminf_{t\to\infty}\mathbb{E}_{x}(f(X_{qt})|T>t)\geq\int_{E}f(x)m(dx).
\end{equation}
On the other hand, since $f$ is bounded, we can repeat the argument, replacing $f$ by $\|f\|_{\infty}-f$, which gives
\begin{equation}
\label{2.3}
\limsup_{t\to\infty}\mathbb{E}_{x}(f(X_{qt})|T>t)\leq\int_{E}f(x)m(dx).
\end{equation}
Combining (\ref{2.2}) and (\ref{2.3}), for positive and bounded function $f$, we have
\begin{equation}
\label{2.4}
\lim_{t\to\infty}\mathbb{E}_{x}(f(X_{qt})|T>t)=\int_{E}f(x)m(dx).
\end{equation}
For (\ref{2.4}), we can extend it to arbitrary bounded $f$ by subtraction.
\par
Finally, by change of variable in the Lebesgue integral and the dominated convergence theorem, we get
\begin{eqnarray*}
\lim_{t\rightarrow\infty}\mathbb{E}_{x}\left(\frac{1}{t}\int_{0}^{t}f(X_{s})ds|T>t\right)
&=&\lim_{t\rightarrow\infty}\mathbb{E}_{x}\left(\int_{0}^{1}f(X_{qt})dq|T>t\right)\\
&=&\lim_{t\rightarrow\infty}\int_{0}^{1}\mathbb{E}_{x}(f(X_{qt})|T>t)dq\\
&=&\int_{E}f(x)m(dx).
\end{eqnarray*}
Thus, we have proved that there exists a quasi-ergodic distribution for the process $X$.
\par
If Assumption (A) holds, then we know from \cite[Theorem 3.1]{CV16} that the $Q$-process admits the unique invariant probability measure
\begin{equation*}
\beta(dx)=\eta(x)\nu(dx).
\end{equation*}
Thus, $m$ coincides with the unique stationary distribution $\beta$ of the $Q$-process. This ends the proof of the theorem.
\end{proof}
\begin{remark}
From Theorem $\ref{thm2.1}$, one can easily see that $\nu\lesssim m$ $($see definition below$)$ as soon as $\eta$ is increasing.
\end{remark}
According to \cite[Theorem 2.1]{CV16}, we know that Assumption (A) implies that for all probability measure $\mu$ on $E$ and all $A\in\mathcal {E}$,
\begin{equation*}
\lim_{t\rightarrow\infty}\mathbb{P}_{\mu}(X_t\in A|T>t)=\nu(A).
\end{equation*}
From \cite[Proposition 1.2]{CV16}, we also know that Assumption (A) implies that for all probability measure $\mu$ on $E$,
\begin{equation*}
\lim_{t\to\infty}e^{\lambda t}\mathbb{P}_{\mu}(T>t)=\int_{E}\eta(x)\mu(dx).
\end{equation*}
Thus, by using a similar argument as in the proof of Theorem \ref{thm2.1}, we have the following result.
\begin{corollary}
Assume that Assumption $(\mathrm{A})$ is satisfied. Then, for any initial distribution $\mu$ on $E$ and any bounded measurable function $f$ on $E$, we have
\begin{equation*}
\lim_{t\rightarrow\infty}\mathbb{E}_{\mu}\left(\frac{1}{t}\int_{0}^{t}f(X_s)ds|T>t\right)=\int_{E}f(x)m(dx),
\end{equation*}
where $m$ is as in Theorem $\ref{thm2.1}$.
\end{corollary}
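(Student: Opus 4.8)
The plan is to mimic the proof of Theorem~\ref{thm2.1} essentially verbatim, the only difference being that the process now starts from the distribution $\mu$ rather than from a fixed point $x\in E$. The two facts that replace \eqref{2.1} and \eqref{2.33} are precisely the two $\mu$-versions recalled just before the statement, quoted from \cite{CV16}: for every bounded measurable $g$ on $E$,
\begin{equation*}
\lim_{t\to\infty}\mathbb{E}_{\mu}\bigl(g(X_{t})\,\big|\,T>t\bigr)=\int_{E}g\,d\nu,
\qquad
\lim_{t\to\infty}e^{\lambda t}\,\mathbb{P}_{\mu}(T>t)=\int_{E}\eta\,d\mu.
\end{equation*}
Since $\eta$ is strictly positive on $E$ and $\mu$ is a probability measure on $E$, we have $\int_{E}\eta\,d\mu>0$; this positivity is what makes it legitimate to divide by $e^{\lambda t}\mathbb{P}_{\mu}(T>t)$ in the limit, and it is really the only point in the argument requiring a word of care.

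First I would treat a bounded $f\geq0$. Fix $q\in(0,1)$ and $u>0$, and keep the same auxiliary function $h_{u}(x)=\inf\{e^{\lambda r}\mathbb{P}_{x}(T>r)/\eta(x):r\geq u\}$, so that $0\leq h_{u}\uparrow1$ pointwise by \eqref{2.1} and $fh_{u}\eta$ is bounded measurable, with the same uniform bound used in the proof of Theorem~\ref{thm2.1}. Using the disintegration $\mathbb{E}_{\mu}=\int_{E}\mathbb{E}_{x}\,\mu(dx)$ and the Markov property at time $qt$, then bounding $e^{\lambda(1-q)t}\mathbb{P}_{X_{qt}}(T>(1-q)t)\geq h_{u}(X_{qt})\eta(X_{qt})$ (valid once $(1-q)t\geq u$, by the definition of $h_{u}$), one obtains for all large $t$
\begin{equation*}
\mathbb{E}_{\mu}\bigl(f(X_{qt})\,\big|\,T>t\bigr)\ \geq\ \frac{e^{\lambda qt}\,\mathbb{P}_{\mu}(T>qt)}{e^{\lambda t}\,\mathbb{P}_{\mu}(T>t)}\ \mathbb{E}_{\mu}\bigl(fh_{u}\eta(X_{qt})\,\big|\,T>qt\bigr).
\end{equation*}
Letting $t\to\infty$, the second $\mu$-limit makes the quotient tend to $1$ and the first makes the conditional expectation tend to $\int_{E}fh_{u}\eta\,d\nu$, so $\liminf_{t\to\infty}\mathbb{E}_{\mu}(f(X_{qt})|T>t)\geq\int_{E}fh_{u}\eta\,d\nu$; monotone convergence in $u$ then gives $\liminf_{t\to\infty}\mathbb{E}_{\mu}(f(X_{qt})|T>t)\geq\int_{E}f\,dm$. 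Applying the same estimate with $\|f\|_{\infty}-f$ in place of $f$ yields the matching $\limsup$ bound, so $\lim_{t\to\infty}\mathbb{E}_{\mu}(f(X_{qt})|T>t)=\int_{E}f\,dm$ for bounded $f\geq0$, and hence for every bounded measurable $f$ by subtraction.

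Finally I would conclude exactly as in Theorem~\ref{thm2.1}: change variables $s=qt$ to write $\frac{1}{t}\int_{0}^{t}f(X_{s})\,ds=\int_{0}^{1}f(X_{qt})\,dq$, use Fubini to move the conditional expectation inside the $dq$-integral, and then apply dominated convergence (the integrand is bounded by $\|f\|_{\infty}$) together with the pointwise-in-$q\in(0,1)$ limit just established to get $\lim_{t\to\infty}\mathbb{E}_{\mu}\bigl(\frac{1}{t}\int_{0}^{t}f(X_{s})\,ds\,\big|\,T>t\bigr)=\int_{0}^{1}\int_{E}f\,dm\,dq=\int_{E}f\,dm$. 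There is no genuinely new obstacle compared with Theorem~\ref{thm2.1}: the substantive ingredients, namely the $\mu$-versions of \eqref{2.1} and \eqref{2.33}, are already in hand, and what remains is the same bookkeeping (positivity of $\int_{E}\eta\,d\mu$, boundedness of $fh_{u}\eta$, monotone and dominated convergence) carried out above.
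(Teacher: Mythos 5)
Your proposal is correct and follows exactly the route the paper intends: the paper gives no separate proof but states that the corollary follows "by a similar argument as in the proof of Theorem \ref{thm2.1}," using the two $\mu$-versions of \eqref{2.1} and \eqref{2.33} quoted just before the statement, which is precisely what you do. Your explicit remark that $\int_{E}\eta\,d\mu>0$ (so that the normalizing ratio tends to $1$) is the one point the paper leaves implicit, and you handle it correctly.
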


\section{Birth--death processes}
\label{sect3}
In this section, we study the quasi-ergodic distribution of birth--death processes. It is presented to illustrate that our main result is valid by using a new proof method which is different from the one used in the main result. Moreover, we also give a comparison between the quasi-ergodic distribution and the quasi-stationary distribution for the birth--death process.
\par
Let $X=(X_{t},t\geq0)$ be a continuous-time birth--death process taking values in $\mathbb{Z}_{+}:=\{0\}\cup\mathbb{N}$, where $0$ is an absorbing state and $\mathbb{N}=\{1,2,\cdots\}$ is an
irreducible transient class. Its jump rate matrix $Q:=(q_{ij},i,j\in\mathbb{Z}_{+})$ satisfies
\begin{equation*}
  q_{ij}=\left\{\begin{array}{ll} b_i&{\rm if}\ j=i+1,i\geq0,\\ d_i&{\rm if}\ j=i-1,i\geq1,\\  -(b_i+d_i)&{\rm if}\ j=i,i\geq0,\\ 0&{\rm otherwise}, \end{array}\right.
\end{equation*}
where the birth rates $(b_i,i\in\mathbb{N})$ and death rates $(d_i,i\in\mathbb{N})$ are strictly positive, and $d_{0}=b_{0}=0$.
\par
Define the potential coefficients $\pi=(\pi_i,i\in\mathbb{N})$ by
\begin{equation}
\label{3.1}
 \pi_1=1~~~~~~\mathrm{and}~~~~~~\pi_i=\frac{{{b}}_{1}{{b}}_{2}\cdots{{b}}_{i-1}}{{{d}}_{2}{{d}}_{3}\cdots{{d}}_{i}},~~~~i\geq2.
\end{equation}
Then, we have $b_i\pi_i=d_{i+1}\pi_{i+1}$, for $i\in\mathbb{N}$.
\par
Put
\begin{equation}
\label{3.2}
A=\sum_{i=1}^\infty\frac{1}{b_i\pi_i},~~~B=\sum_{i=1}^\infty{\pi}_{i},
~~~R=\sum_{i=1}^\infty\frac{1}{b_i\pi_i}\sum_{j=1}^{i}\pi_{j},~~~S=\sum_{i=1}^\infty\frac{1}{b_i\pi_i}\sum_{j=i+1}^{\infty}\pi_{j}.
\end{equation}
We observe that
\begin{equation*}
R+S=AB,~~~~A=\infty\Rightarrow R=\infty,~~~~S<\infty\Rightarrow B<\infty.
\end{equation*}
If absorption at 0 is certain which means that $\mathbb{P}_{i}(T<\infty)=1$, for $i\in\mathbb{N}$, where $T=\inf\{t\geq0: X_t=0\}$ is the absorption time of $X$, then it is equivalent to $A=\infty$ (see \cite{KM57a}). Therefore, $A=\infty$ implies the process $X$ is non-explosive. In this section, we assume that $A=\infty$. Note that, if absorption at 0 is certain, then $S<\infty$ is equivalent to Assumption (A) (see \cite[Theorem 4.1]{CV16}).
\par
We write $P_{ij}(t)=\mathbb{P}_{i}(X_{t}=j)$. It is well known (see, e.g., \cite[Theorem 5.1.9]{A91}) that under our assumptions, there exists a parameter $\lambda\geq0$, called
the decay parameter of the process $X$, such that
\begin{equation}
\label{3.3}
\lambda=-\lim_{t\to\infty}\frac{1}{t}\log P_{ij}(t),~~i,j\in\mathbb{N}.
\end{equation}
In \cite[Theorem 3.2]{VD91}, van Doorn proved that: (i) if $\infty$ is an entrance boundary (i.e., $R=\infty, S<\infty$), then $\lambda>0$ and there is a unique quasi-stationary distribution for the process $X$;
(ii) if $\infty$ is a natural boundary (i.e., $R=\infty, S=\infty$), then either $\lambda>0$ and there is an infinite continuum of quasi-stationary distributions, or $\lambda=0$ and there is no quasi-stationary distribution.
\par
Let $(Q_{i}(x),i\geq0)$ be the birth--death polynomials, given as
\begin{equation}
\label{3.4}
 \begin{array}{ll}
 &Q_{0}(x)=0,\\
 &Q_{1}(x)=1,\\
 &b_{i}Q_{i+1}(x)-(b_{i}+d_i)Q_{i}(x)+d_iQ_{i-1}(x)=-x Q_{i}(x),~~~~~i\in\mathbb{N}.
 \end{array}
\end{equation}
It is well known (see, e.g., \cite{C78}) that $Q_{i}(x)$ has $i-1$ positive, simple zeros, $x_{ij}~(j=1,2,\cdots,i-1)$, which verify the interlacing property
\begin{equation}
\label{3.5}
0<x_{i+1,j}<x_{i,j}<x_{i+1,j+1},~~j=1,2,\cdots,i-1,~i\geq2.
\end{equation}
Therefore, the following limits
\begin{equation}
\label{3.6}
\xi_{j}\equiv\lim_{i\to\infty}x_{ij},~~j\geq1,
\end{equation}
exist and satisfy $0\leq\xi_{j}\leq\xi_{j+1}<\infty$. It is easy to see from (\ref{3.4}) that, as a result,
\begin{equation}
\label{3.7}
x\leq\xi_{1}\Longleftrightarrow Q_{i}(x)>0~~~~\mathrm{for~all}~~i\in\mathbb{N}.
\end{equation}
 When $A=\infty$ and $S<\infty$, we have $0<\lambda=\xi_{1}<\xi_{2}<\cdots$ with $\lim_{j\to\infty}\xi_{j}=\infty$.
\par
Of importance to us in the proof of our main result will be the process $X$ conditioned to never be absorbed, usually referred to as the $Q$-process. Let $\overline{P}_{ij}(t)=\mathbb{P}_{i}(Y_{t}=j)$ be transition kernel of the $Q$-process $Y=(Y_{t},t\geq0)$. From \cite{HMS03}, we know that $\overline{P}_{ii}(t)=e^{\lambda t}P_{ii}(t)$, for all $i\in\mathbb{N}$. Thus, the $\lambda$-classification of the killed process $X^T$ can be presented in the following form. If $Y$ is positive recurrent (resp. recurrent, null recurrent, transient), then the killed process $X^T$ is said to be $\lambda$-positive (resp. $\lambda$-recurrent, $\lambda$-null, $\lambda$-transient).
\par
For two probability vectors $\rho=(\rho(i),i\in\mathbb{N})$ and $\rho^{\prime}=(\rho^{\prime}(i),i\in\mathbb{N})$,
we put $\rho^{\prime}\lesssim\rho$ and said that $\rho^{\prime}$ is stochastically smaller than $\rho$ in the sense of monotone likelihood-ratio ordering if and only if $(\rho(i)/\rho^{\prime}(i),i\in\mathbb{N})$ is increasing.
\par
For the birth--death process, we have the following result.
\par
\begin{theorem}
\label{thm3.1}
Let $X$ be a birth--death process for which $0$ is an absorbing state and $\infty$ is an entrance boundary. Then, there exists a quasi-ergodic distribution
$m=(m_{i}, i\in\mathbb{N})$ for the process $X$, where
\begin{equation*}
m_{i}=\frac{\pi_{i}Q_{i}^{2}(\lambda)}{\sum_{j\in\mathbb{N}}\pi_{j}Q_{j}^{2}(\lambda)}.
\end{equation*}
In particular, $m$ is the unique stationary distribution of the $Q$-process.
Moreover, $\nu\lesssim m$, where $\nu$ is the unique quasi-stationary distribution of the process $X$.
\end{theorem}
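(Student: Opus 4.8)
The statement bundles three claims: existence and uniqueness of the quasi-ergodic distribution together with the explicit weights $m_i$; the identification of $m$ with the invariant law of the $h$-process; and the likelihood-ratio domination $\nu\lesssim m$. Since $0$ is absorbing with certain absorption (so $A=\infty$) and $\infty$ is an entrance boundary ($R=\infty$, $S<\infty$), the condition $S<\infty$ gives Assumption $(\mathrm{A})$ by \cite[Theorem 4.1]{CV16}, so Theorem \ref{thm2.1} already furnishes a unique quasi-ergodic distribution $m(dx)=\eta(x)\nu(dx)$ equal to the unique invariant law of the $h$-process; this disposes of the second claim and reduces the first to computing $\eta$ and $\nu$. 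The plan is to do that with the three-term recurrence (\ref{3.4}). (If one wants existence established without appealing to Theorem \ref{thm2.1}, one inserts the Karlin--McGregor representation $P_{ij}(t)=\pi_j\int_{[0,\infty)}e^{-xt}Q_i(x)Q_j(x)\,\psi(dx)$ from \cite{KM57b} into the conditioned averages $\mathbb E_i\bigl(\tfrac1t\int_0^t f(X_s)\,ds \mid T>t\bigr)$ and lets $t\to\infty$, isolating the contribution of the bottom atom of $\psi$; this is a longer route.)

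First I would identify $\eta$ and $\nu$. By \cite[Proposition 2.3]{CV16}, $\eta>0$ on $\mathbb N$, $\eta(0)=0$, and $L\eta=-\lambda\eta$; written out for the birth-death generator this is precisely (\ref{3.4}) at $x=\lambda$ for the sequence $(\eta(i))$ subject to $\eta(0)=0$, and since a second-order recurrence is determined by its first two terms, $\eta(i)=\eta(1)Q_i(\lambda)$. Similarly the quasi-stationary distribution $\nu$ solves $\sum_{i\in\mathbb N}\nu_iq_{ij}=-\lambda\nu_j$ for $j\in\mathbb N$; writing $\nu_i=\pi_iw_i$ and using $b_i\pi_i=d_{i+1}\pi_{i+1}$ turns this into (\ref{3.4}) at $x=\lambda$ for $(w_i)$ with $w_0=0$, so $\nu_i=\nu_1\pi_iQ_i(\lambda)$ --- that such a $\nu$ exists, is unique, and in particular that $\sum_j\pi_jQ_j(\lambda)<\infty$ is van Doorn's \cite[Theorem 3.2]{VD91}. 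Hence $m_i=\eta(i)\nu_i=\eta(1)\nu_1\,\pi_iQ_i^2(\lambda)$, and since $m$ is a probability vector the series $\sum_j\pi_jQ_j^2(\lambda)$ converges with sum $(\eta(1)\nu_1)^{-1}$, giving exactly $m_i=\pi_iQ_i^2(\lambda)\big/\sum_j\pi_jQ_j^2(\lambda)$. (Finiteness of this series also follows directly from the entrance-boundary hypothesis, under which $\psi$ is purely atomic with $\psi(\{\lambda\})=\bigl(\sum_j\pi_jQ_j^2(\lambda)\bigr)^{-1}>0$.)

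Next I would prove the ordering. By the previous step $m_i/\nu_i$ is proportional to $Q_i(\lambda)$, so $\nu\lesssim m$ is equivalent to the monotonicity of $i\mapsto Q_i(\lambda)$ on $\mathbb N$. First, (\ref{3.7}) together with $\lambda=\xi_1$ gives $Q_k(\lambda)>0$ for every $k\in\mathbb N$. Multiplying (\ref{3.4}) at $x=\lambda$ by $\pi_i$ and telescoping via $b_i\pi_i=d_{i+1}\pi_{i+1}$, the quantity $v_i:=b_i\pi_i\bigl(Q_{i+1}(\lambda)-Q_i(\lambda)\bigr)$ satisfies $v_i=v_{i-1}-\lambda\pi_iQ_i(\lambda)$ for $i\ge 2$ with $v_1=d_1-\lambda$, hence $v_i=d_1-\lambda\sum_{k=1}^i\pi_kQ_k(\lambda)$ and is strictly decreasing (as $\lambda>0$). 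If $v_{i_0}\le 0$ for some $i_0$, then $v_i\le v_{i_0+1}<0$ for every $i\ge i_0+1$, so $Q_N(\lambda)=Q_{i_0+1}(\lambda)+\sum_{i=i_0+1}^{N-1}v_i/(b_i\pi_i)$ is bounded above by $Q_{i_0+1}(\lambda)+v_{i_0+1}\sum_{i=i_0+1}^{N-1}1/(b_i\pi_i)$, which tends to $-\infty$ as $N\to\infty$ because $A=\sum_i1/(b_i\pi_i)=\infty$; this contradicts $Q_N(\lambda)>0$. Therefore $v_i>0$ for every $i$, $Q_i(\lambda)$ is strictly increasing, and $\nu\lesssim m$ follows (in fact strictly). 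A quicker route to the same conclusion: from $\nu$, absorption occurs at the constant rate $\lambda$ and only through the jump $1\to0$ at rate $d_1$, so $\nu_1d_1=\lambda$, i.e.\ $\sum_{k\ge1}\pi_kQ_k(\lambda)=d_1/\lambda$; every partial sum is then strictly below $d_1/\lambda$, which is exactly $v_i>0$.

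The two recurrence identifications and the telescoping computation are routine. I expect the one genuinely delicate point --- and the only place where the entrance-boundary hypothesis is really used, should one forgo Theorem \ref{thm2.1} --- to be the existence half via Karlin--McGregor: one must show that for an entrance boundary at $\infty$ the spectral measure $\psi$ is purely atomic with a gap above $\lambda=\xi_1$, so that $e^{\lambda t}\mathbb P_i(T>t)$ converges with limit governed solely by the atom at $\lambda$, and one must control the attendant interchange of limits and the absolute convergence of $\sum_j\pi_jQ_j(\lambda)$ and $\sum_j\pi_jQ_j^2(\lambda)$. Invoking Theorem \ref{thm2.1} avoids all of this at the price of reusing the general argument in place of a self-contained orthogonal-polynomial proof.
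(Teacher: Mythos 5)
Your proposal is correct, but for the existence and identification part it takes a genuinely different route from the paper. You reduce everything to Theorem \ref{thm2.1}: you note that certain absorption plus $S<\infty$ gives Assumption (A) (the paper itself records this equivalence just before the theorem, citing \cite[Theorem 4.1]{CV16}), and then you identify $\eta(i)=\eta(1)Q_i(\lambda)$ from $L\eta=-\lambda\eta$ and $\nu_i=\tfrac{\lambda}{d_1}\pi_iQ_i(\lambda)$ from van Doorn, so that $m_i\propto\pi_iQ_i^2(\lambda)$ with the normalization $\sum_j\pi_jQ_j^2(\lambda)<\infty$ forced by $m$ being a probability vector. The paper deliberately does \emph{not} reuse Theorem \ref{thm2.1}: it shows the $h$-process is itself a birth--death process on $\mathbb{N}$ with potential coefficients $\overline{\pi}_i=\pi_iQ_i^2(\lambda)$, proves $\overline{R}=\infty$ and $\overline{S}<\infty$ (using that $Q_i(\lambda)$ is increasing, bounded below by $1$ and bounded above by \cite[Lemma 3.4]{GM15}), concludes via \cite[Theorem 3.1]{M02} that the $h$-process is \emph{strongly} ergodic and hence that $X^T$ is $\lambda$-positive, and then computes $\lim_t\mathbb{E}_i\bigl(\tfrac1t\int_0^t{\bf1}_{\{X_s=j\}}\,ds\mid T>t\bigr)$ directly from \eqref{6.1} and \eqref{6.2} by a Ces\`aro argument. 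What your route buys is brevity; what the paper's route buys is a self-contained orthogonal-polynomial verification (the stated purpose of Section \ref{sect3.1}) plus the extra conclusions of strong ergodicity and $\lambda$-positivity, and an independent proof that $\sum_j\pi_jQ_j^2(\lambda)<\infty$. For the ordering $\nu\lesssim m$ your argument coincides with the paper's: both reduce to monotonicity of $i\mapsto Q_i(\lambda)$ via the telescoped identity $b_k\pi_k(Q_{k+1}(\lambda)-Q_k(\lambda))=d_1-\lambda\sum_{i=1}^{k}\pi_iQ_i(\lambda)=\lambda\sum_{i=k+1}^{\infty}\pi_iQ_i(\lambda)>0$; your additional contradiction argument using $A=\infty$ is a valid alternative that avoids invoking the normalization $\lambda\sum_i\pi_iQ_i(\lambda)=d_1$. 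The only caveat worth flagging is that your Karlin--McGregor fallback for existence is left as a sketch, but since your primary reduction to Theorem \ref{thm2.1} is complete, this does not leave a gap.
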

\begin{proof}
We first prove that the $Q$-process $Y$ is strongly ergodic, which implies the killed process $X^T$ is $\lambda$-positive. Although for general, almost surely absorbed Markov processes, Champagnat and Villemonais have proved that the process $Y$ is exponentially ergodic (see \cite[Theorem 3.1]{CV16}), we will prove that the process $Y$ is strongly ergodic by using a different proof method here. According to \cite[Proposition 5.9]{CMS13}, the process $Y$,
whose law starting from $i\in\mathbb{N}$ is given by
\begin{equation*}
\mathbb{P}_{i}(Y_{s_{1}}=i_{1},\cdots,Y_{s_{k}}=i_{k}):=\lim_{t\rightarrow\infty}\mathbb{P}_{i}(X_{s_{1}}=i_{1},\cdots,X_{s_{k}}=i_{k}|T>t),
\end{equation*}
is a Markov chain with transition kernel
\begin{equation}
\label{3.8}
\forall i,j\in \mathbb{N}:~~~~\mathbb{P}_{i}(Y_{s}=j)=e^{\lambda s}\frac{Q_{j}(\lambda)}{Q_{i}(\lambda)}\mathbb{P}_{i}(X_{s}=j).
\end{equation}
From (\ref{3.8}), we get that the process $Y$ is still a birth--death process taking values in $\mathbb{N}$, and its birth and death parameters are given respectively by
\begin{equation*}
\forall i\in \mathbb{N}:~~~~\overline{{b}}_{i}=\frac{Q_{i+1}(\lambda)}{Q_{i}(\lambda)}{b}_{i},
\end{equation*}
\begin{equation*}
\forall i\in \mathbb{N}:~~~~\overline{{d}}_{i}=\frac{Q_{i-1}(\lambda)}{Q_{i}(\lambda)}{d}_{i}.
\end{equation*}
So, we can compute the potential coefficients $\overline{\pi}=(\overline{\pi}_{i},i\in \mathbb{N})$ analogous to (\ref{3.1}): $\overline{\pi}_{1}=1$ and
\begin{equation*}
\overline{\pi}_{i}=\frac{\overline{{b}}_{1}\overline{{b}}_{2}\cdots\overline{{b}}_{i-1}}{\overline{{d}}_{2}\overline{{d}}_{3}\cdots\overline{{d}}_{i}}=Q_{i}^{2}(\lambda){\pi}_{i},~~~i\geq 2.
\end{equation*}
Similarly, we can compute the constants $\overline{A}, \overline{B}, \overline{R}, \overline{S}$ analogous to (\ref{3.2}):
\begin{align*}
\overline{A}&=\sum_{i=1}^\infty\frac{1}{Q_{i+1}(\lambda)Q_{i}(\lambda)b_i\pi_i},   &
\overline{B}&=\sum_{i=1}^{\infty}Q_{i}^{2}(\lambda){\pi}_{i},                      \\
\overline{R}&=\sum_{i=1}^{\infty}\frac{1}{Q_{i+1}(\lambda)Q_{i}(\lambda)b_i\pi_i}\sum_{j=1}^{i}Q_{j}^{2}(\lambda){\pi}_{j},&
\overline{S}&=\sum_{i=1}^{\infty}\frac{1}{Q_{i+1}(\lambda)Q_{i}(\lambda)b_i\pi_i}\sum_{j=i+1}^{\infty}Q_{j}^{2}(\lambda){\pi}_{j}.
\end{align*}
\par
From (\ref{3.4}), we have
\begin{equation}
\label{3.9}
b_{i}(Q_{i+1}(\lambda)-Q_{i}(\lambda))+d_i(Q_{i-1}(\lambda)-Q_{i}(\lambda))=-\lambda Q_{i}(\lambda).
\end{equation}
Multiplying both sides of (\ref{3.9}) by ${\pi}_{i}$, and then sum of $i$ from 1 to $k$, we get
\begin{equation*}
b_{k}{\pi}_{k}(Q_{k+1}(\lambda)-Q_{k}(\lambda))-d_1{\pi}_{1}Q_{1}(\lambda)=-\lambda\sum_{i=1}^{k}{\pi}_{i}Q_{i}(\lambda).
\end{equation*}
Note that $Q_{1}(\lambda)=1$, ${\pi}_{1}=1$ and $\lambda\sum_{i\in\mathbb{N}}\pi_{i}Q_{i}(\lambda)=d_1$ by (\ref{3.10}). Then
\begin{equation*}
Q_{k+1}(\lambda)-Q_{k}(\lambda)=\frac{\lambda}{b_{k}{\pi}_{k}}\sum_{i=k+1}^{\infty}{\pi}_{i}Q_{i}(\lambda)>0,~~~~~k\in\mathbb{N}.
\end{equation*}
Therefore, $Q_{i}(\lambda)$ is strictly increasing with $i\in\mathbb{N}$ and has the minimum 1. Also, we know from \cite[Lemma 3.4]{GM15}
that $Q_{i}(\lambda)$ is bounded, denoted by $W$ an upper bound. Thus, we have
\begin{eqnarray*}
\overline{R}&\geq&\sum_{i=1}^{\infty}\frac{1}{Q_{i+1}(\lambda)Q_{i}(\lambda)b_i\pi_i}\sum_{j=1}^{i}{\pi}_{j}\\
            &\geq&\sum_{i=1}^{\infty}\frac{1}{W^{2}b_i\pi_i}\sum_{j=1}^{i}{\pi}_{j}\\
            &=&\frac{1}{W^{2}}R\\
            &=&\infty
\end{eqnarray*}
and
\begin{eqnarray*}
\overline{S}&\leq&\sum_{i=1}^{\infty}\frac{1}{Q_{i+1}(\lambda)Q_{i}(\lambda)b_i\pi_i}\sum_{j=i+1}^{\infty}W^{2}{\pi}_{j}\\
            &\leq& W^{2}\sum_{i=1}^{\infty}\frac{1}{b_i\pi_i}\sum_{j=i+1}^{\infty}{\pi}_{j}\\
            &=&W^{2}S\\
            &<&\infty.
\end{eqnarray*}
Note that 1 is a reflecting boundary for the process $Y$. Hence, we know from \cite[Theorem 3.1]{M02} that the process $Y$ is strongly ergodic. Therefore, the killed process $X^T$ is $\lambda$-positive. And, it is well known that there exists a unique stationary distribution $(\frac{\pi_{i}Q_{i}^{2}(\lambda)}{\sum_{j\in\mathbb{N}}\pi_{j}Q_{j}^{2}(\lambda)}, i\in\mathbb{N})$ for the process $Y$.
From \cite{VD91}, we know that $Q(\lambda):=(Q_{i}(\lambda),i\in\mathbb{N})$ is a $\lambda$-invariant function for $Q$, that is, $QQ(\lambda)=-\lambda Q(\lambda)$, and the process $X$ admits $\nu=(\nu_{i},i\in\mathbb{N})$ as the unique quasi-stationary distribution, where
\begin{equation}
\label{3.10}
\nu_{i}=\frac{\pi_{i}Q_{i}(\lambda)}{\sum_{j\in\mathbb{N}}\pi_{j}Q_{j}(\lambda)}=\frac{\lambda}{{d}_{1}}\pi_{i}Q_{i}(\lambda).
\end{equation}
This implies that the series $\sum_{i\in\mathbb{N}}\pi_{i}Q_{i}(\lambda)$ is summable, and $\theta=(\theta_{i},i\in\mathbb{N})$ is the unique $\lambda$-invariant measure for $Q$, where $\theta_{i}=\pi_{i}Q_{i}(\lambda)$, that is, $\theta Q=-\lambda \theta$.
\par
Because the killed process $X^T$ is $\lambda$-positive, we know from \cite[Theorem 5.2.8]{A91} that
\begin{equation}
\label{6.1}
\lim_{t\rightarrow\infty}e^{\lambda t}P_{ij}(t)=\frac{Q_{i}(\lambda)\pi_{j}Q_{j}(\lambda)}{\sum_{k\in\mathbb{N}}\pi_{k}Q_{k}^{2}(\lambda)}.
\end{equation}
Also, we know from the proof of \cite[Proposition 5.9]{CMS13} that
\begin{equation}
\label{6.2}
\lim_{t\rightarrow\infty}\frac{\mathbb{P}_{j}(T>t)}{\mathbb{P}_{i}(T>t)}=\frac{Q_{j}(\lambda)}{Q_{i}(\lambda)}.
\end{equation}
Thus, for any $i\in\mathbb{N}$, we have
\begin{eqnarray*}
\lim_{t\rightarrow\infty}\mathbb{E}_{i}\left(\frac{1}{t}\int_{0}^{t}{\bf1}_{\{X_{s}=j\}}ds|T>t\right)
&=&\lim_{t\rightarrow\infty}\frac{1}{t}\int_{0}^{t}\frac{P_{ij}(s)\sum_{k\neq0}P_{jk}(t-s)}{\sum_{k\neq0}P_{ik}(t)}ds\\
&=&\lim_{t\rightarrow\infty}\frac{1}{t}\int_{0}^{t}\frac{e^{\lambda s}P_{ij}(s)\sum_{k\neq0}e^{\lambda(t-s)}P_{jk}(t-s)}{\sum_{k\neq0}e^{\lambda t}P_{ik}(t)}ds\\
&=&\lim_{x\rightarrow\infty}e^{\lambda x}P_{ij}(x)\lim_{y\rightarrow\infty}\frac{\sum_{k\neq0}e^{\lambda y}P_{jk}(y)}{\sum_{k\neq0}e^{\lambda y}P_{ik}(y)}\\
&=&\lim_{x\rightarrow\infty}e^{\lambda x}P_{ij}(x)\lim_{y\rightarrow\infty}\frac{\mathbb{P}_{j}(T>y)}{\mathbb{P}_{i}(T>y)}\\
&=&\frac{\pi_{j}Q_{j}^{2}(\lambda)}{\sum_{k\in\mathbb{N}}\pi_{k}Q_{k}^{2}(\lambda)}.
\end{eqnarray*}
Hence, we know that there exists a quasi-ergodic distribution $m$ for the process $X$.
\par
Notice that
\begin{eqnarray*}
\frac{m_{i}}{\nu_{i}}=\frac{\frac{\pi_{i}Q_{i}^{2}(\lambda)}{\sum_{j\in\mathbb{N}}\pi_{j}Q_{j}^{2}(\lambda)}}{\frac{\lambda}{{d}_{1}}\pi_{i}Q_{i}(\lambda)}
=\frac{Q_{i}(\lambda)}{\frac{\lambda}{{d}_{1}}\overline{B}}.
\end{eqnarray*}
Furthermore, from the proof of the above result, we know that $Q_{i}(\lambda)$ is increasing with $i\in\mathbb{N}$. Therefore, $({m_{i}}/{\nu_{i}},i\in\mathbb{N})$ is increasing. Thus, the result follows.
\end{proof}

\section{An example}
\label{sect5}
Set $d_{0}=b_{0}=0; b_i=i+3, d_i=(i+1)^{2}, i\in\mathbb{N}$. Then $\pi_1=1,$
\begin{align*}
\pi_i&=\frac{2(i+2)}{3(i+1)!},\\
 A&=\sum\limits_{i=1}^\infty\frac{1}{b_i\pi_i}=\frac{3}{2}\sum\limits_{i=1}^\infty\frac{(i+1)!}{(i+3)(i+2)}=\infty,\\
 S&=\sum\limits_{i=1}^\infty\frac{1}{b_i\pi_i}\sum\limits_{j=i+1}^{\infty}\pi_{j}=\sum\limits_{i=1}^\infty\frac{(i+1)!}{(i+3)(i+2)}\sum\limits_{j=i+1}^{\infty}\frac{(j+2)}{(j+1)!}<\infty.
\end{align*}
Moreover, the decay parameter $\lambda=2$ and the corresponding eigenfunction is $Q_{i}(\lambda)=\frac{i}{i+2}$ (see \cite[Example 5.1]{GM15}). From \cite[Theorem 3.2]{VD91}, we know that
there exists a unique quasi-stationary distribution $\nu=(\nu_{i}, i\in\mathbb{N})$ for the birth--death process $X$, where
\begin{equation*}
\nu_{i}=\frac{i}{(i+1)!}.
\end{equation*}
By Theorem \ref{thm3.1}, we know that $m=(m_{i}, i\in\mathbb{N})$ is the quasi-ergodic distribution of the process $X$, where
\begin{equation*}
m_{i}=\frac{\pi_{i}Q_{i}^{2}(\lambda)}{\sum_{j\in\mathbb{N}}\pi_{j}Q_{j}^{2}(\lambda)}=\frac{\frac{i^{2}}{(i+2)(i+1)!}}{\sum\limits_{j\in\mathbb{N}}\frac{j^{2}}{(j+2)(j+1)!}}.
\end{equation*}
Further, it is easy to check that
\begin{equation*}
\frac{m_{i}}{\nu_{i}}=\frac{\frac{i}{i+2}}{\sum\limits_{j\in\mathbb{N}}\frac{j^{2}}{(j+2)(j+1)!}}
\end{equation*}
is increasing with $i\in\mathbb{N}$, which implies $\nu\lesssim m$. Thus, this example illustrates our results.

\section*{Acknowledgements}
This work was supported by the National Natural Science Foundation of China (Grant No.
11371301) and the Key Laboratory of Hunan Province for New Retail Virtual Reality Technology
(Grant No. 2017TP1026).

\baselineskip 0.23in

\end{document}